\let\NAT@parse\undefined
\def\@linkcolor{blue}
  \def\@anchorcolor{red}
  \def\@citecolor{red}
  \def\@filecolor{red}
  \def\@urlcolor{red}
  \def\@menucolor{red}
  \def\@pagecolor{red}
  \edef\x{%
    \edef\noexpand\x{%
      \endgroup
      \noexpand\toks@{%
        \catcode 96=\noexpand\the\catcode`\noexpand\`\relax
        \catcode 61=\noexpand\the\catcode`\noexpand\=\relax
      }%
    }%
    \noexpand\x
  }%
\newtheorem{Theorem}{Theorem}
\newtheorem{Lemma}{Lemma}
\newtheorem{Remark}{Remark}
\newtheorem{Corollary}{Corollary}
\def\BibTeX{{\rm B\kern-.05em{\sc i\kern-.025em b}\kern-.08em
    T\kern-.1667em\lower.7ex\hbox{E}\kern-.125emX}}
\begin{document}

\title{\LARGE \bf{Characterization of Domain of Fixed-time Stability under Control Input Constraints}}

\author{Kunal Garg \and Dimitra Panagou
\thanks{
The authors would like to acknowledge the support of the Air Force Office of Scientific Research under award number FA9550-17-1-0284.}
\thanks{The authors are with the Department of Aerospace Engineering, University of Michigan, Ann Arbor, MI, USA; \texttt{\{kgarg, dpanagou\}@umich.edu}.}
}
\maketitle

\begin{abstract}
In this paper, we study the effect of control input constraints on the domain of attraction of an FxTS equilibrium point. We first present a new result on FxTS, where we allow a positive term in the time derivative of the Lyapunov function. We provide analytical expressions for the domain of attraction and the settling time to the equilibrium in terms of the coefficients of the positive and negative terms that appear in the time derivative of the Lyapunov function. We show that this result serves as a robustness characterization of FxTS equilibria in the presence of an additive, vanishing disturbances. We use the new FxTS result in formulating a provably feasible quadratic program (QP) that computes control inputs that drive the trajectories of a class of nonlinear, control-affine systems to a goal set, in the presence of control input constraints. 
\end{abstract}

\section{Introduction}
In control problems where the objective is to stabilize closed-loop trajectories to a given desired point or a set, control Lyapunov functions (CLFs) are very commonly used to design the control input \cite{romdlony2016stabilization,ames2014rapidly}. Traditionally, CLFs have been used to design closed-form expressions for control inputs using Sontag's formula \cite{sontag1989universal,romdlony2016stabilization}. More recently, quadratic programs (QPs) have gained popularity for control synthesis; with this approach, the CLF conditions are formulated as inequalities that are linear in the control input \cite{li2018formally,ames2017control}, and the control input is computed as a solution to these parametric QPs. These methods are suitable for real-time implementation as QPs can be solved very efficiently. In most of the prior work on QP-based control design, the feasibility of the underlying QP is not guaranteed particularly in the presence of input constraints.

The work in \cite{li2018formally,ames2014rapidly,ames2017control} considers the design of control laws so that reachability objectives, such as reaching the desired goal set, are achieved as time goes to infinity, i.e., asymptotically or exponentially. Much attention has been paid recently to the concepts of finite- and fixed-time stability, where the system trajectories reach an equilibrium point or a set in a \textit{finite} or \textit{fixed} time, as opposed to asymptotically or exponentially. Fixed-time stability (FxTS), introduced in \cite{polyakov2012nonlinear}, is a stronger notion than exponential stability, where the time of convergence is finite, and is uniformly bounded for all initial conditions. Research has also shown that there is also a correlation between a faster rate of convergence and better disturbance rejection properties for a dynamical system \cite{bhat2000finite,polyakov2012nonlinear}. A lot of work has been since done in the field of FxTS; the authors in \cite{lopez2019conditions,lopez2018necessary} discuss necessary and sufficient conditions for FxTS; \cite{li2017fixed,corradini2018nonsingular} present FxTS results from a sliding-mode perspective (see also \cite{lopez2018fixed,liu2018distributed,wei2018observer} for some examples of applications of FxTS theory in control and estimation problems). Recently, the concept of fixed-time CLF (FxT-CLF) was introduced \cite{garg2019control}, which combines the notion of CLF and FxTS in a QP, but without any feasibility guarantees. 

The aforementioned papers study \textit{global} FxTS, which requires unbounded control authority. Since it is not possible to guarantee FxTS from arbitrary initial conditions in the presence of control input constraints, it is important to study the domain of attraction from which FxTS can be guaranteed in the presence of input bounds. To this end, in this paper, we present new Lyapunov conditions on FxTS by introducing a (possibly positive) linear term in the upper bound of the derivative of the Lyapunov function.
We show that FxTS can still be guaranteed from a domain of attraction that depends upon the relative magnitude of the positive and the negative terms in the bound of the time derivative of the Lyapunov function. We compute an upper bound on the time of convergence to the equilibrium, which is also a function of the relative magnitude of the positive and negative terms. We discuss the relation between the proposed results on FxTS and the robustness of FxTS systems under additive vanishing 
disturbances. Besides, based on the results in \cite{garg2019prescribedTAC}, we use the new FxTS conditions in a QP formulation, where the control objective is to drive closed-loop trajectories to a goal set in a given fixed time, in the presence of control input constraints. The results of this paper extend and formalize the results in \cite{garg2019control} in a QP framework, such that feasibility, as well as fixed-time convergence, can be simultaneously guaranteed from a domain of attraction that is a function of the input bounds and time of convergence. We perform numerical experiments to relate the domain of attraction with the required time of convergence and with the control input bounds. 

\section{Mathematical Preliminaries}\label{sec: math prelim}
\noindent\textbf{Notations}: In the rest of the paper, $\mathbb R$ denotes the set of real numbers, and $\mathbb R_+$ denotes the set of non-negative real numbers. We use $\|\cdot\|$ to denote the Euclidean norm. We use $\partial S$ to denote the boundary of a closed set $S$ and $\textrm{int}(S) = S\setminus \partial S$, to denote its interior. 

Next, we review the notion of fixed-time stability. Consider the nonlinear system
\begin{align}\label{ex sys}
\dot x(t) = f(x(t)), \quad x(0) = x_0,
\end{align}
where $x\in \mathbb R^n$ and $f: \mathbb R^n \rightarrow \mathbb R^n$ is continuous with $f(0)=0$. Assume that the solution of \eqref{ex sys} exists and is unique. 
The authors in \cite{polyakov2012nonlinear} presented the following result for FxTS.

\begin{Lemma}[\hspace{-0.5pt}\cite{polyakov2012nonlinear}]\label{FxTS TH}
Suppose there exists a continuously differentiable, positive definite, radially unbounded function $V$ for the system \eqref{ex sys} such that 
\begin{align}\label{eq: dot V FxTS old }
    \dot V(x) \leq -aV(x)^p-bV(x)^q,
\end{align}
for all $x\neq 0$, where $a,b>0$, $0<p<1$ and $q>1$. Then, the origin of \eqref{ex sys} is FxTS, and the time of convergence $T$ is uniformly bounded as $T \leq \frac{1}{a(1-p)} + \frac{1}{b(q-1)}$.
\end{Lemma}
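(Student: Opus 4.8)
The plan is to combine a comparison-principle argument with an explicit estimate of the settling time of a scalar ODE whose bound turns out to be independent of the initial value. First I would record that \eqref{eq: dot V FxTS old } gives $\dot V(x) \le -aV(x)^p - bV(x)^q \le 0$ for all $x \neq 0$, so $V$ is non-increasing along solutions of \eqref{ex sys}; together with positive definiteness and radial unboundedness of $V$, this yields (global) Lyapunov stability of the origin by the classical Lyapunov theorem, and also keeps solutions bounded, so forward completeness is consistent with the standing existence/uniqueness assumption. What remains is to establish finite-time convergence with the claimed uniform time bound.

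Next I would introduce the scalar comparison system $\dot y = -ay^p - by^q$ with $y(0) = V(x_0)$. On any interval where $y>0$ the right-hand side is continuous, so a solution exists; separating variables gives $t = \int_{y(t)}^{y(0)} \frac{ds}{as^p + bs^q}$, and since $p<1$ the integrand is integrable near $0$, so $y(t)$ decreases monotonically to $0$ at the finite time $T(V(x_0)) := \int_0^{V(x_0)} \frac{ds}{as^p + bs^q}$, after which we extend by $y \equiv 0$. By the comparison lemma applied on the open set where $V>0$, and using $f(0)=0$ together with uniqueness of solutions to conclude that $x(t)=0$ for all later times once a trajectory reaches the origin, one obtains $V(x(t)) \le y(t)$ for all $t \ge 0$. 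Since $V$ is positive definite and $y(t)=0$ for $t \ge T(V(x_0))$, this forces $x(t)=0$ for all $t \ge T(V(x_0))$.

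The crux is the uniform bound on $T(V(x_0))$, which I would get by splitting the integral at $s=1$: for $s \in (0,1]$ we have $as^p + bs^q \ge as^p$, so $\int_0^{\min\{V(x_0),1\}} \frac{ds}{as^p+bs^q} \le \frac{1}{a}\int_0^1 s^{-p}\,ds = \frac{1}{a(1-p)}$; for $s \ge 1$ we have $as^p + bs^q \ge bs^q$, so $\int_1^{V(x_0)} \frac{ds}{as^p+bs^q} \le \frac{1}{b}\int_1^{\infty} s^{-q}\,ds = \frac{1}{b(q-1)}$ (this second piece is vacuous when $V(x_0) \le 1$). Adding the two estimates gives $T(V(x_0)) \le \frac{1}{a(1-p)} + \frac{1}{b(q-1)} =: T$ for \emph{every} $x_0$. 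Being independent of $x_0$, this uniform bound upgrades finite-time convergence to fixed-time stability and yields exactly the claimed $T$.

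The main obstacle I anticipate is the rigorous use of the comparison principle with a right-hand side ($-ay^p-by^q$) that is not locally Lipschitz at $0$: rather than quoting the Lipschitz version of the comparison lemma, one must run the comparison argument on the open region where $V>0$ and then patch at the origin via the standing uniqueness hypothesis and $f(0)=0$. The remaining ingredients are routine — the monotonicity/stability step is classical, and the settling-time bound reduces to the elementary split-integral estimate above.
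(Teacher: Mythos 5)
The paper does not prove this lemma at all: it is quoted directly from \cite{polyakov2012nonlinear}, so there is no in-paper argument to compare against. Your proof is correct and is essentially the standard argument behind that cited result -- a comparison with the scalar system $\dot y=-ay^p-by^q$ together with the settling-time integral $\int_0^{V(x_0)}\frac{ds}{as^p+bs^q}$ split at $s=1$ (equivalently, Polyakov's two-phase estimate: the $-bV^q$ term drives $V$ below $1$ within $\tfrac{1}{b(q-1)}$, and the $-aV^p$ term drives it to zero within $\tfrac{1}{a(1-p)}$), and your handling of the non-Lipschitz right-hand side by restricting the comparison to $\{V>0\}$ and invoking $f(0)=0$ plus uniqueness to freeze trajectories at the origin is the right way to close that gap.
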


\section{Main results}\label{sec: main results}
In this section, we present a new result on FxTS. Particularly, we introduce another term in the upper bound of $\dot V$ in \eqref{eq: dot V FxTS old }, and allow this term to take positive values. Consider a positive definite, continuously differentiable function $V:\mathbb R^n\rightarrow\mathbb R$, such that its time derivative along the trajectories of \eqref{ex sys} satisfies
\begin{align}
    \hspace{-5pt}\dot V(x(t)) \leq -\alpha_1V(x(t))^{\gamma_1}-\alpha_2V(x(t))^{\gamma_2}+\delta_1V(x(t)),
\end{align}
for all $t\geq 0$, with $\alpha_1, \alpha_2>0$, $\delta_1\in \mathbb R$, $\gamma_1 = 1+\frac{1}{\mu}$, $\gamma_2 = 1-\frac{1}{\mu}$ for some $\mu>1$. 

\vspace{2pt}
\noindent \textbf{New FxTS Lyapunov conditions}: Before presenting the first main result, we need the following lemma. 

\begin{Lemma}\label{lemma:int dot V}
Let $V_0, \alpha_1, \alpha_2>0$, $\delta_1\in \mathbb R$, $\gamma_1 = 1+\frac{1}{\mu}$ and $\gamma_2 = 1-\frac{1}{\mu}$, where $\mu>1$. Define 
\begin{align}\label{eq:int dot V}
    I \coloneqq \int_{V_0}^{0}\frac{dV}{-\alpha_1V^{\gamma_1}-\alpha_2V^{\gamma_2}+\delta_1V}.
\end{align}
Then, the following holds:
\begin{itemize}
    \item[(i)] If $0\leq \delta_1<2\sqrt{\alpha_1\alpha_2}$, we have for all $V_0\geq 0$
    \begin{align}\label{eq: I bound 1}
        I\leq \frac{\mu}{\alpha_1k_1}\left(\frac{\pi}{2}-\tan^{-1}k_2\right),
    \end{align}
    where $k_1 = \sqrt{\frac{4\alpha_1\alpha_2-\delta_1^2}{4\alpha_1^2}}$ and $k_2 = -\frac{\delta_1}{\sqrt{4\alpha_1\alpha_2-\delta_1^2}}$;
    \item[(ii)] If $\delta_1\geq 2\sqrt{\alpha_1\alpha_2}$ and $V_0^\frac{1}{\mu}\leq k\frac{\delta_1-\sqrt{\delta_1^2-4\alpha_1\alpha_2}}{2\alpha_1}$ with $0<k<1$, we have for all $V_0\geq 0$
    \begin{align}\label{eq: I bound 2}
        I \leq \frac{\mu k}{(1-k)\sqrt{\alpha_1\alpha_2}}.
    \end{align}
\end{itemize}
\end{Lemma}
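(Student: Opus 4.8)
The integral $I$ in \eqref{eq:int dot V} is of the form $\int_0^{V_0} \frac{dV}{\alpha_1 V^{\gamma_1} + \alpha_2 V^{\gamma_2} - \delta_1 V}$ (after flipping the limits and signs). Since $\gamma_1 = 1 + \tfrac1\mu$ and $\gamma_2 = 1 - \tfrac1\mu$, I would factor $V^{\gamma_2} = V^{1-1/\mu}$ out of the denominator, writing the integrand as $\frac{dV}{V^{1-1/\mu}\left(\alpha_1 V^{2/\mu} - \delta_1 V^{1/\mu} + \alpha_2\right)}$. This makes the substitution $w = V^{1/\mu}$, so $dw = \tfrac1\mu V^{1/\mu - 1}\,dV$, i.e. $\mu\, dw = V^{-(1-1/\mu)}\,dV$, transform the integral exactly into
\begin{align}\label{eq:w-integral}
  I = \mu \int_0^{w_0} \frac{dw}{\alpha_1 w^2 - \delta_1 w + \alpha_2}, \qquad w_0 = V_0^{1/\mu}.
\end{align}
This is the key reduction: everything now hinges on the quadratic $g(w) = \alpha_1 w^2 - \delta_1 w + \alpha_2$ and the sign of its discriminant $\delta_1^2 - 4\alpha_1\alpha_2$.

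\textbf{Case (i): $0 \le \delta_1 < 2\sqrt{\alpha_1\alpha_2}$.} Here the discriminant is negative, so $g(w) > 0$ for all $w$ and the integrand in \eqref{eq:w-integral} is smooth and positive. Completing the square, $g(w) = \alpha_1\big((w - \tfrac{\delta_1}{2\alpha_1})^2 + k_1^2\big)$ with $k_1^2 = \tfrac{4\alpha_1\alpha_2 - \delta_1^2}{4\alpha_1^2}$, so the antiderivative is an arctangent:
\begin{align}
  I = \frac{\mu}{\alpha_1 k_1}\left[\tan^{-1}\!\Big(\tfrac{w_0 - \delta_1/(2\alpha_1)}{k_1}\Big) - \tan^{-1}\!\Big(\tfrac{-\delta_1/(2\alpha_1)}{k_1}\Big)\right].
\end{align}
The second arctangent simplifies: $\tfrac{-\delta_1/(2\alpha_1)}{k_1} = \tfrac{-\delta_1}{\sqrt{4\alpha_1\alpha_2 - \delta_1^2}} = k_2$. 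Since the integrand is positive, $I$ is increasing in $w_0$, so the supremum over all $V_0 \ge 0$ (hence all $w_0 \ge 0$) is attained as $w_0 \to \infty$, where $\tan^{-1}(\cdot) \to \tfrac\pi2$. That gives the bound \eqref{eq: I bound 1} directly.

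\textbf{Case (ii): $\delta_1 \ge 2\sqrt{\alpha_1\alpha_2}$.} Now $g$ has two real roots $w_\pm = \tfrac{\delta_1 \pm \sqrt{\delta_1^2 - 4\alpha_1\alpha_2}}{2\alpha_1}$, and the hypothesis $V_0^{1/\mu} = w_0 \le k\,w_-$ with $0 < k < 1$ keeps the whole interval of integration strictly below the smaller root, where $g(w) > 0$. Rather than integrating exactly (a log of a rational function), I would lower-bound the denominator on $[0, w_0]$: for $w \in [0, k w_-]$ one has $\alpha_1 w^2 \ge 0$ and $g(w) = \alpha_1 w^2 + \alpha_2 - \delta_1 w \ge \alpha_2 - \delta_1 k w_-$ — but a cleaner route is to note $g(w) = \alpha_1(w_- - w)(w_+ - w)$ and bound $w_- - w \ge w_-(1-k)$ and $w_+ - w \ge w_+ - w_- \ge$ \dots\ Actually the crispest bound uses $g(w) \ge \alpha_1(1-k)\,w_- \cdot (w_+ - w_0)$ is awkward; instead, since $w_+ w_- = \alpha_2/\alpha_1$ and on $[0,w_0]$ we have $w_+ - w \ge w_+ - w_0 \ge w_+ - w_- $ \emph{only if} $w_0 \le w_-$, giving $g(w)\ge \alpha_1(w_- - w)(w_+ - w_-)$; integrating $\int_0^{w_0}\frac{dw}{\alpha_1(w_- -w)(w_+-w_-)}$ and using $w_0 \le kw_-$ yields a $\log\frac{1}{1-k}$ term, not the stated linear-in-$k$ bound. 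So the intended estimate must instead bound the \emph{integrand} pointwise by its value near $w_-$: on $[0, kw_-]$, $w_- - w \ge (1-k)w_-$ and $w_+ - w \ge w_+ - kw_- \ge w_+ - w_-$, so $\frac{1}{g(w)} \le \frac{1}{\alpha_1 (1-k) w_- (w_+ - w_-)}$, and then $I \le \mu \cdot w_0 \cdot \frac{1}{\alpha_1(1-k)w_-(w_+-w_-)} \le \frac{\mu k w_-}{\alpha_1(1-k)w_-(w_+ - w_-)} = \frac{\mu k}{\alpha_1(1-k)(w_+-w_-)}$. Finally $w_+ - w_- = \tfrac{\sqrt{\delta_1^2-4\alpha_1\alpha_2}}{\alpha_1}$, and one checks $\alpha_1(w_+-w_-) = \sqrt{\delta_1^2 - 4\alpha_1\alpha_2} \ge \dots$; to land exactly on $\sqrt{\alpha_1\alpha_2}$ in the denominator of \eqref{eq: I bound 2} requires replacing $w_+ - w_-$ by the smaller quantity $w_+$ (since $w_+ \ge w_+ - w_-$) and using $w_+ w_- \ge w_-^2$, i.e. $w_+ \ge \sqrt{\alpha_2/\alpha_1}\cdot$(ratio); the honest bookkeeping is $\frac{1}{w_+ - w} \le \frac{1}{w_+ - kw_-}$ and $w_+ - kw_- \ge w_+ - w_- $, but also $w_+(w_+ - kw_-)\ge \dots$ — I will track constants carefully here.

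\textbf{Main obstacle.} Case (i) is essentially a calculus exercise once the substitution \eqref{eq:w-integral} is in place; the monotonicity-in-$w_0$ observation makes the supremum clean. The real work is Case (ii): the exact antiderivative is logarithmic and does \emph{not} give the claimed linear-in-$k$ bound $\frac{\mu k}{(1-k)\sqrt{\alpha_1\alpha_2}}$, so one must choose the right pointwise lower bound on the quadratic $g(w)$ over $[0, kw_-]$ and combine it with $w_0 \le k w_-$ and the Vieta relation $w_+ w_- = \alpha_2/\alpha_1$ to recover exactly $\sqrt{\alpha_1\alpha_2}$ in the denominator. Getting that constant-chase to close — in particular showing $w_-(w_+ - w_-)$ or whatever product appears is $\ge \sqrt{\alpha_1\alpha_2}$ times the needed factor, possibly using $\delta_1 \ge 2\sqrt{\alpha_1\alpha_2} \Rightarrow w_+ \ge w_- \ge \sqrt{\alpha_2/\alpha_1}\cdot c$ — is the step I expect to require the most care, and it is where a slightly different grouping of the factors $(w_- - w)$, $(w_+ - w)$ than my first attempt above will likely be needed.
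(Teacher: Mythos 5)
Your reduction via $w=V^{1/\mu}$ and your Case (i) argument (complete the square, arctangent, monotonicity in $w_0$, let $w_0\to\infty$) are correct and essentially identical to the paper's proof of part (i). The problem is Case (ii), where your proposal does not close — and, more importantly, closes the door on the route that actually works. You assert that the exact logarithmic antiderivative ``does not give the claimed linear-in-$k$ bound,'' but that is precisely how the paper proves it: with $0<a\le b$ the roots of $\alpha_1 w^2-\delta_1 w+\alpha_2$ (so $ab=\alpha_2/\alpha_1$), partial fractions give $I=\frac{\mu}{\alpha_1(b-a)}\log\frac{a(b-w_0)}{b(a-w_0)}$, and using $w_0\le ka$ this is at most $\frac{\mu}{\alpha_1(b-a)}\log\frac{b-ka}{b(1-k)}$. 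The paper then shows this upper bound is monotone in the ratio $\delta_1/(2\sqrt{\alpha_1\alpha_2})$ and attains its supremum in the limit $b\to a$, where it equals $\frac{\mu k}{(1-k)\alpha_1 a}=\frac{\mu k}{(1-k)\sqrt{\alpha_1\alpha_2}}$. (Equivalently, and more elementarily, write $\frac{b-ka}{b(1-k)}=1+\frac{k(b-a)}{b(1-k)}$, use $\log(1+x)\le x$, and then $b\ge\sqrt{ab}=\sqrt{\alpha_2/\alpha_1}$, i.e. $\alpha_1 b\ge\sqrt{\alpha_1\alpha_2}$.) So the logarithm is not an obstacle; it is the mechanism.

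The replacement you sketch — a pointwise lower bound on the quadratic over $[0,kw_-]$ — genuinely cannot reach the stated constant. Your grouping $w_--w\ge(1-k)w_-$, $w_+-w\ge w_+-w_-$ fails because $\alpha_1(w_+-w_-)=\sqrt{\delta_1^2-4\alpha_1\alpha_2}$ vanishes as $\delta_1\downarrow 2\sqrt{\alpha_1\alpha_2}$, so the bound blows up; the symmetric grouping $w_--w\ge(1-k)w_-$, $w_+-w\ge(1-k)w_+$ together with Vieta gives only $I\le\frac{\mu k}{(1-k)^2\sqrt{\alpha_1\alpha_2}}$; and bounding one factor pointwise while integrating the other produces a $\log\frac{1}{1-k}$ factor, which exceeds $k$ for every $k\in(0,1)$. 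All of these are strictly weaker than $\frac{\mu k}{(1-k)\sqrt{\alpha_1\alpha_2}}$ (badly so as $k\to1$), and your writeup itself stops mid\textendash constant-chase without exhibiting a grouping that works. As it stands, part (ii) is unproven in your proposal; the fix is to keep the exact partial-fraction evaluation and finish with either the paper's limiting/monotonicity argument or the $\log(1+x)\le x$ estimate above.
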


\noindent Lemma \ref{lemma:int dot V} gives upper bounds on the integral $I$ for various cases (which will serve as the upper-bound on the fixed time of convergence as shown next). The proof is provided in Appendix \ref{app Lemma int dot V proof}. Now we are ready to present our main result on new Lyapunov conditions for FxTS. 

\begin{Theorem}\label{Th: FxTS new}
Let $V:\mathbb R^n\rightarrow \mathbb R$ be a continuously differentiable, positive definite, proper function, satisfying
\begin{align}\label{eq: dot V new ineq}
     \dot V(x) \leq     -\alpha_1V(x)^{\gamma_1}-\alpha_2V(x)^{\gamma_2}+\delta_1V(x),
\end{align}
for all $x\in \mathbb R^n\setminus\{0\}$ along the trajectories of \eqref{ex sys} with $\alpha_1, \alpha_2>0$, $\delta_1\in \mathbb R$, $\gamma_1 = 1+\frac{1}{\mu}$, $\gamma_2 = 1-\frac{1}{\mu}$ for some $\mu>1$. Then, there exists a neighborhood $D\subseteq \mathbb R^n$ of the origin such that for all $x(0)\in D$, the trajectories of \eqref{ex sys} satisfy $x(t)\in D$ for all $t\geq 0$, and reach the origin within a fixed time $T$, where{\small
\begin{align}
    D & = \begin{cases} \; \mathbb R^n; &  r < 1,\\
    \left\{x\; |\; V(x)\leq k^\mu\left(\frac{\delta_1-\sqrt{\delta_1^2-4\alpha_1\alpha_2}}{2\alpha_1}\right)^\mu\right\}; & r\geq 1, 
    \end{cases},\label{eq: domain of attraction}\\
    T & \leq \begin{cases}\frac{\mu\pi}{2\sqrt{\alpha_1\alpha_2}};& \hspace{66pt} r\leq 0,\\
    \frac{\mu}{\alpha_1k_1}\left(\frac{\pi}{2}-\tan^{-1}k_2\right); & \hspace{66pt} 0 \leq r<1,\\
    \frac{\mu k}{(1-k)\sqrt{\alpha_1\alpha_2}}; & \hspace{66pt} r \geq 1,
    \end{cases},\label{new FxTS T est}
\end{align}}\normalsize
where $r\coloneqq \frac{\delta_1}{2\sqrt{\alpha_1\alpha_2}}$, $0<k<1$, 
$k_1 = \sqrt{\frac{4\alpha_1\alpha_2-\delta_1^2}{4\alpha_1^2}}$ and $k_2 = -\frac{\delta_1}{\sqrt{4\alpha_1\alpha_2-\delta_1^2}}$.
\end{Theorem}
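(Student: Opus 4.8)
The plan is to read off from \eqref{eq: dot V new ineq} the region on which $\dot V$ is guaranteed strictly negative — this simultaneously fixes the set $D$ in \eqref{eq: domain of attraction} and yields forward invariance — and then to turn the scalar differential inequality into a separation-of-variables estimate whose value is exactly the integral $I$ of \eqref{eq:int dot V}, which is bounded by Lemma~\ref{lemma:int dot V}.

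\textbf{Step 1 (region of negativity and the choice of $D$).} Writing $z = V^{1/\mu}$, one has $-\alpha_1V^{\gamma_1}-\alpha_2V^{\gamma_2}+\delta_1V = -V^{1-1/\mu}\,q(z)$ with $q(z)=\alpha_1 z^2-\delta_1 z+\alpha_2$, whose discriminant $\delta_1^2-4\alpha_1\alpha_2$ has the sign of $r^2-1$. If $r<1$ then $q(z)>0$ for all $z$, so the right-hand side of \eqref{eq: dot V new ineq} is strictly negative on $\mathbb R^n\setminus\{0\}$ and I take $D=\mathbb R^n$. If $r\ge 1$ then $q$ has two positive roots $z_\pm=(\delta_1\pm\sqrt{\delta_1^2-4\alpha_1\alpha_2})/(2\alpha_1)$ and $q>0$ on $[0,z_-)$; choosing $0<k<1$ and $D=\{x:V(x)\le(kz_-)^\mu\}$, which is precisely \eqref{eq: domain of attraction}, forces $V(x)^{1/\mu}\le kz_-<z_-$ on $D$, so again the bound on $\dot V$ is strictly negative on $D\setminus\{0\}$. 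In both cases $D$ is a neighborhood of the origin, and since $V$ is positive definite, continuous and proper, each sublevel set $\{V\le c\}\subseteq D$ with $c>0$ is compact.

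\textbf{Step 2 (invariance and global existence).} For $x(0)\in D$, as long as $x(t)\in D\setminus\{0\}$ we have $\dot V(x(t))<0$ by Step~1, so $t\mapsto V(x(t))$ is strictly decreasing; hence the trajectory cannot cross $\partial D$ outward, it stays in the compact set $\{V\le V(x(0))\}$, and there is no finite escape time. Thus $x(t)$ is defined for all $t\ge 0$ and remains in $D$. \textbf{Step 3 (finite reaching time and its bound).} Set $\phi(t)=V(x(t))\in C^1$, $T^\ast=\inf\{t\ge0:\phi(t)=0\}$, and $h(s)=-\alpha_1 s^{\gamma_1}-\alpha_2 s^{\gamma_2}+\delta_1 s$, so $\dot\phi\le h(\phi)<0$ on $[0,T^\ast)$. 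Dividing by $h(\phi)<0$ and integrating over $[0,t]$ gives $t\le\int_{\phi(t)}^{\phi(0)}ds/(-h(s))\le\int_0^{V(x(0))}ds/(-h(s))$, and this last integral is exactly $I$ of \eqref{eq:int dot V} with $V_0=V(x(0))$. Since $-h(s)\sim\alpha_2 s^{\gamma_2}$ as $s\to0^+$ and $\gamma_2=1-1/\mu<1$, $I$ converges, so $T^\ast\le I<\infty$: the origin is reached in finite time and (as $f(0)=0$ with unique solutions) held thereafter. It then remains to bound $I$ uniformly over $x(0)\in D$: for $r\le0$ the term $\delta_1V$ is nonpositive, so $I\le\int_0^{V_0}dV/(\alpha_1V^{\gamma_1}+\alpha_2V^{\gamma_2})\le\mu\pi/(2\sqrt{\alpha_1\alpha_2})$ (the $\delta_1=0$ instance of Lemma~\ref{lemma:int dot V}(i)); for $0\le r<1$, Lemma~\ref{lemma:int dot V}(i) gives $I\le\frac{\mu}{\alpha_1k_1}(\frac\pi2-\tan^{-1}k_2)$; and for $r\ge1$ the defining inequality of $D$ reads $V_0^{1/\mu}\le k(\delta_1-\sqrt{\delta_1^2-4\alpha_1\alpha_2})/(2\alpha_1)$, so Lemma~\ref{lemma:int dot V}(ii) gives $I\le\frac{\mu k}{(1-k)\sqrt{\alpha_1\alpha_2}}$. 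These are exactly the three bounds of \eqref{new FxTS T est}, and none depends on $x(0)$, so the settling time is uniformly bounded over $D$.

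\textbf{Main obstacle.} The one delicate point is that $h$ is not Lipschitz at $0$, so one cannot invoke a black-box comparison lemma; the separation-of-variables estimate of Step~3 is legitimate because $\phi\in C^1$ is strictly decreasing while positive, and finiteness of $T^\ast$ hinges on the convergence of $I$ at $s=0$, which is precisely where $\gamma_2<1$ is used. Everything else — the closed-form/explicit bounds on $I$ in the three regimes — is already packaged in Lemma~\ref{lemma:int dot V}.
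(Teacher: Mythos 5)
Your proposal is correct and follows essentially the same route as the paper: identify where the right-hand side of \eqref{eq: dot V new ineq} is negative via the quadratic in $V^{1/\mu}$ (equivalently, the minimum of $\alpha_1 s+\alpha_2/s$) to obtain $D$ and its forward invariance, then use separation of variables to bound the settling time by the integral $I$ of \eqref{eq:int dot V} and invoke Lemma~\ref{lemma:int dot V} in the three regimes of $r$. Your added care about global existence, convergence of $I$ at $s=0$ (where $\gamma_2<1$ is used), and the non-Lipschitzness of the comparison function is a welcome tightening of details the paper leaves implicit, but it is not a different argument.
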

\begin{proof}
Note that the domain of attraction $D$ and the time of convergence $T$ are functions of the ratio $r\coloneqq \frac{\delta_1}{2\sqrt{\alpha_1\alpha_2}}$.  The three cases, namely, $r \leq 0$, $0\leq r< 1$ and $r\geq 1$ are studied separately. 

For $r\leq 0$, one can recover the right-hand side of \eqref{eq: dot V FxTS old } from \eqref{eq: dot V new ineq}, and it follows from Lemma \ref{FxTS TH} that $D = \mathbb R^n$, and from part (i) in Lemma \ref{lemma:int dot V} with $r \leq 0$, it follows that $T\leq \frac{\mu\pi}{2\sqrt{\alpha_1\alpha_2}}$. 

Next, consider the case when $0\leq r< 1$. First it is shown that there exists $D\subseteq \mathbb R^n$ containing the origin such that $\dot V(x)<0$ for all $x\in D\setminus\{0\}$, so that any sub-level set of the function $V$ contained in $D$ is forward-invariant. The right-hand side of \eqref{eq: dot V new ineq} can be re-arranged so that \eqref{eq: dot V new ineq} reads
\begin{align*}
   \dot V(x)\leq  V(x)\left(-\alpha_1V(x)^{\gamma_1-1}-\alpha_2V(x)^{\gamma_2-1}+\delta_1\right).
\end{align*}
Note that $V(x)>0$ for all $x\neq 0$. Thus, for $\dot V(x)$ to take negative values for all $x\neq 0$, it is needed that 
\begin{align*}
  &  \min_{x\neq 0}\left(-\alpha_1V(x)^{\gamma_1-1}-\alpha_2V(x)^{\gamma_2-1}+\delta_1\right)<0 \\
   \iff & \delta_1<\min_{x\neq 0}\left(\alpha_1V(x)^{\gamma_1-1}+\alpha_2V(x)^{\gamma_2-1}\right)\\
    \iff  & \delta_1<\min_{x\neq 0}\left(\alpha_1V(x)^\frac{1}{\mu}+\alpha_2V(x)^{-\frac{1}{\mu}}\right).
\end{align*}
Substitute $s = V(x)^\frac{1}{\mu}$ to denote $\alpha_1V(x)^\frac{1}{\mu}+\alpha_2V(x)^{-\frac{1}{\mu}} = \alpha_1s+\frac{\alpha_2}{s}$. Then, the function $p:\mathbb R_+\rightarrow \mathbb R$ defined as $$p(s) \coloneqq \alpha_1s+\frac{\alpha_2}{s}$$ is a strictly convex function since $\frac{d^2p}{ds^2} = \frac{2\alpha_2}{s^3}>0$ for all $k>0$ and has a unique minimizer. The derivative of $p$ reads $\frac{dp}{ds} = \alpha_1-\frac{\alpha_2}{s^2}$, which has a unique root in $\mathbb R_+$ at $s = \sqrt{\frac{\alpha_2}{\alpha_1}}$.\footnote{Only the non-negative root is of interest, since $s  = V(x)^\frac{1}{\mu}\geq 0$.} Thus the minimum is attained for $s=\left(\frac{\alpha_2}{\alpha_1}\right)^{\frac{\mu}{2}}$. Define $V^\star \coloneqq \left(\frac{\alpha_2}{\alpha_1}\right)^{\frac{\mu}{2}}$ and $\delta^\star \coloneqq \alpha_1(V^\star)^\frac{1}{\mu}+\alpha_2(V^\star)^{-\frac{1}{\mu}} = 2\sqrt{\alpha_1\alpha_2}$ so that $\alpha_1V(x)^\frac{1}{\mu}+\alpha_2V(x)^{-\frac{1}{\mu}}\geq \delta^\star$ for all $x\in \mathbb R^n$. Thus, for $r<1$, it holds that $\delta_1<\delta^\star\leq \alpha_1V(x)^\frac{1}{\mu}+\alpha_2V(x)^{-\frac{1}{\mu}}$ for all $x$, and so, $\dot V(x)<0$ for all $x\in \mathbb R^n\setminus\{0\}$. Since $D$ is defined as the largest sub-level set of $V$ such that $\dot V(x)$ takes negative values for $x\in D\setminus \{0\}$, it holds that in the case when $0\leq r<1$, $D = \mathbb R^n$. 

\begin{figure}[!ht]
    \centering
        \includegraphics[width=0.9\columnwidth,clip]{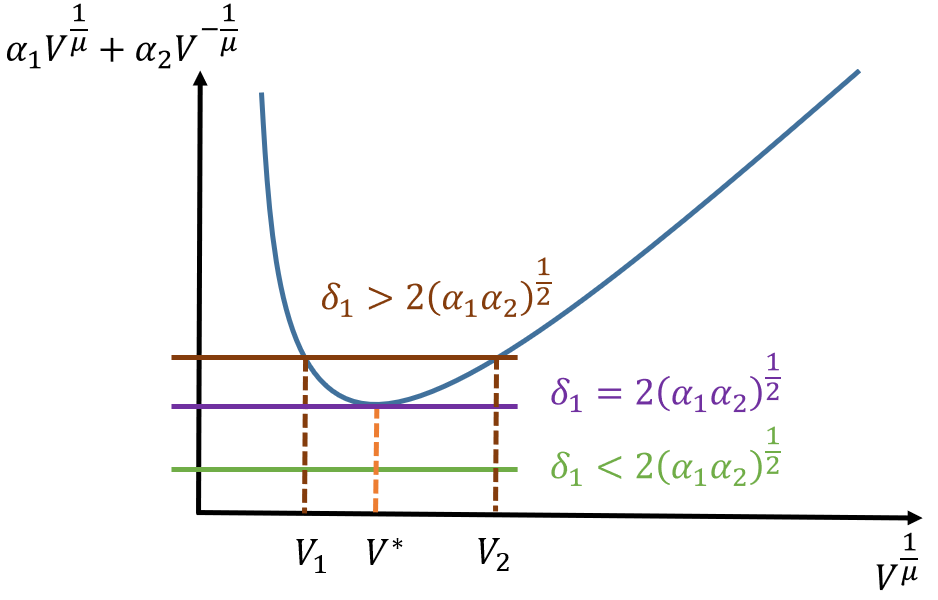}
    \caption{Qualitative variation of $h(V)= \alpha_1V^\frac{1}{\mu}+\alpha_2V^{-\frac{1}{\mu}}$ with $V$, for $\mu>1$.}\label{fig:dot V RHS}
\end{figure}

Finally, for the case when $r\geq 1$, it holds that $\delta_1 = \alpha_1V(x)^{\frac{1}{\mu}}+\alpha_2 V(x)^{-\frac{1}{\mu}}$ for all $x$ such that $V(x)^\frac{1}{\mu} = V_1$ or $V(x)^\frac{1}{\mu} = V_2$, where $V_1$ and $V_2$ are given as 
\begin{align*}
 V_1 \coloneqq \frac{\delta_1-\sqrt{\delta_1^2-4\alpha_1\alpha_2}}{2\alpha_1}, \quad  V_2\coloneqq \frac{\delta_1+\sqrt{\delta_1^2-4\alpha_1\alpha_2}}{2\alpha_1},
\end{align*} 
(see Figure \ref{fig:dot V RHS}). It can be easily verified that if $r\geq 1$, then for all $x$ such that $V(x)^\frac{1}{\mu} < V_1$, it holds that $\delta_1V(x)<\alpha_1V(x)^{\gamma_1}+\alpha_2V(x)^{\gamma_2}$. Thus, $\dot V(x)<0$ for all $x\in D\setminus\{0\}$ with $D = \{x\; |\; V(x)\leq (kV_1)^\mu\}$ for any $0<k<1$. Since $\dot V(x)<0$ for all $x\in D\setminus \{0\}$, it holds that $D$ is forward invariant and thus, is a domain of attraction. 

So far, the domain of attraction $D$ is computed such that starting from any $x(0)\in D$, the system trajectories reach the origin since $\dot V(x)<0$ for all $x\in D\setminus\{0\}$. Next, it is shown that in all the aforementioned cases, the system trajectories reach the origin within a fixed time for all $x(0)\in D$. 

Let $x(0)\in D$, so that $\dot V(x(t))\leq 0$ for all $t\geq 0$ per the analysis above. Thus, from \eqref{eq: dot V new ineq}, it holds that 
\begin{align*}
    & \frac{1}{-\alpha_1V^{\gamma_1}-\alpha_2V^{\gamma_2}+\delta_1V}\frac{dV}{dt} \geq 1, \\
    \implies & \int_{0}^{T}\frac{1}{-\alpha_1V^{\gamma_1}-\alpha_2V^{\gamma_2}+\delta_1V}\frac{dV}{dt}dt = \int_{0}^Tdt,\\
    \implies & \int_{V(x(0))}^{V(x(T))}\frac{dV}{-\alpha_1V^{\gamma_1}-\alpha_2V^{\gamma_2}+\delta_1V}\geq \int_{0}^Tdt = T,\\
    \implies & \int_{V_0}^{0}\frac{dV}{-\alpha_1V^{\gamma_1}-\alpha_2V^{\gamma_2}+\delta_1V}\geq T,
\end{align*}
where $V_0 = V(x(0))$ and $T$ is the time instant when the trajectories reach the origin. Denote the left-hand side of last inequality above as $I$, so that $I\geq T$. 
The cases when $0\leq r<1$ and $r\geq 1$ are considered separately. 

First, let $0\leq r<1$. Using part (i) in Lemma \ref{lemma:int dot V}, it holds that 
\begin{align}\label{eq: T bound case 1}
    T\leq I \overset{\eqref{eq: I bound 1}}{\leq}\frac{\mu}{\alpha_1k_1}\left(\frac{\pi}{2}-\tan^{-1}k_2\right),
\end{align}
where $k_1 = \sqrt{\frac{4\alpha_1\alpha_2-\delta_1^2}{4\alpha_1^2}}$ and $k_2 = -\frac{\sqrt{\delta_1}}{\sqrt{4\alpha_1\alpha_2-\delta_1^2}}$.
Hence, if $\delta_1<2\sqrt{\alpha_1\alpha_2}$, it holds that $\dot V(x(t))<0$ for all $t\geq 0$ and $V(x(t))= 0$ for all $t\geq T$, for all $x(0)\in \mathbb R^n \setminus \{0\}$, where $T\leq \frac{\mu}{\alpha_1k_1}\left(\frac{\pi}{2}-\tan^{-1}k_2\right)$. Since $V$ is proper, the origin is globally FxTS.

Now, for $r\geq 1$, using part (ii) in Lemma \ref{lemma:int dot V}, it holds that
\begin{align}\label{eq: T bound case 2}
    T\leq \frac{\mu k}{(1-k)\sqrt{\alpha_1\alpha_2}}.
\end{align}
The bounds on $T$ in \eqref{eq: T bound case 1} and \eqref{eq: T bound case 2} are independent of the initial condition $x(0)$. Thus, for all $x(0)\in D\setminus\{0\}$, the origin is FxTS.
\end{proof}

Theorem \ref{Th: FxTS new} gives an expression for the domain of attraction $D$ and the time of convergence $T$ as a function of $\alpha_1, \alpha_2, \delta_1$. As thus, Lemma \ref{FxTS TH} and other similar results in the literature (e.g. \cite{parsegov2012nonlinear}) are special cases of Theorem \ref{Th: FxTS new}.

\begin{Remark}
The domain of attraction $D$ in \eqref{eq: domain of attraction} and the time of convergence $T$ in \eqref{new FxTS T est} are functions of the ratio $\frac{\delta_1}{2\sqrt{\alpha_1\alpha_2}}$. In particular, if $\delta_1<2\sqrt{\alpha_1\alpha_2}$, then per \eqref{eq: domain of attraction}, the domain of attraction is the entire $\mathbb R^n$, and for a given $\alpha_1, \alpha_2$, as $\delta_1$ increases, the domain of attraction shrinks. 
\end{Remark}


\vspace{2pt}
\noindent \textbf{Robustness perspective}: In comparison to Lemma \ref{FxTS TH}, Theorem \ref{Th: FxTS new} allows a positive term $\delta_1V$ in the upper bound of the time derivative of the Lyapunov function. This term also captures the robustness against a class of Lipschitz continuous, or vanishing, additive disturbances in the system dynamics, as shown in the following result. Consider the system 
\begin{align}\label{eq:pert sys}
    \dot x = f(x) + \psi(x),
\end{align}
where $f, \psi:\mathbb R^n\rightarrow \mathbb R^n$, $f(0) = 0$ and there exists $L>0$ such that for all $x\in \mathbb R^n$, $\|\psi(x)\|\leq L\|x\|$. 

\begin{Corollary}\label{cor: robust phi FxTS}
Let the origin for the \textit{nominal} system $\dot x = f(x)$ be FxTS and assume that there exists a Lyapunov function $V:\mathbb R^n \rightarrow\mathbb R$ satisfying the conditions of Lemma \ref{FxTS TH}. Assume that there exist $k_1, k_2>0$ such that $V(x) \geq k_1\|x\|^2$ and $ \left\|\frac{\partial V}{\partial x}\right\|\leq k_2\|x\|$ for all $x\in \mathbb R^n$. Then, the origin of the perturbed system \eqref{eq:pert sys} is FxTS. 
\end{Corollary}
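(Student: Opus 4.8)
The plan is to compute $\dot V$ along the trajectories of the perturbed system \eqref{eq:pert sys}, show that it satisfies an inequality of the form required by Theorem \ref{Th: FxTS new}, and then invoke that theorem. Along \eqref{eq:pert sys} one has $\dot V(x) = \frac{\partial V}{\partial x}\bigl(f(x)+\psi(x)\bigr) = \frac{\partial V}{\partial x}f(x) + \frac{\partial V}{\partial x}\psi(x)$. The first summand is exactly the derivative of $V$ along the nominal dynamics \eqref{ex sys}, so by the assumed conditions of Lemma \ref{FxTS TH} it is bounded above by $-aV(x)^p - bV(x)^q$ with $a,b>0$, $0<p<1$, $q>1$. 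For the cross term, the Cauchy--Schwarz inequality together with the hypotheses $\bigl\|\frac{\partial V}{\partial x}\bigr\|\le k_2\|x\|$, $\|\psi(x)\|\le L\|x\|$, and $V(x)\ge k_1\|x\|^2$ gives $\frac{\partial V}{\partial x}\psi(x) \le \bigl\|\frac{\partial V}{\partial x}\bigr\|\,\|\psi(x)\| \le k_2L\|x\|^2 \le \frac{k_2L}{k_1}V(x)$. Hence $\dot V(x) \le -aV(x)^p - bV(x)^q + \frac{k_2L}{k_1}V(x)$ for all $x\neq 0$.

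Next I would recast $-aV^p - bV^q$ into the symmetric form $-\alpha_1 V^{\gamma_1} - \alpha_2 V^{\gamma_2}$ with $\gamma_1 = 1+\tfrac1\mu$, $\gamma_2 = 1-\tfrac1\mu$ that Theorem \ref{Th: FxTS new} requires. Pick $\mu>1$ large enough that $1+\tfrac1\mu \le q$ and $1-\tfrac1\mu \ge p$ (possible because $q-1>0$ and $1-p>0$), and set $\alpha_1 = \alpha_2 = \tfrac12\min\{a,b\}$. A short case split on $V\le 1$ versus $V\ge 1$ then shows $\alpha_1 V^{\gamma_1} + \alpha_2 V^{\gamma_2} \le aV^p + bV^q$ for every $V\ge 0$: for $V\le 1$ both $V^{\gamma_1}$ and $V^{\gamma_2}$ are at most $V^p$, while for $V\ge 1$ both are at most $V^q$. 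Therefore $\dot V(x) \le -\alpha_1 V(x)^{\gamma_1} - \alpha_2 V(x)^{\gamma_2} + \delta_1 V(x)$ with $\delta_1 \coloneqq \frac{k_2L}{k_1}>0$.

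Finally, since $V$ is continuously differentiable, positive definite, and (being bounded below by $k_1\|x\|^2$) radially unbounded, Theorem \ref{Th: FxTS new} applies with the above $\alpha_1,\alpha_2,\delta_1,\mu$ and yields a neighborhood $D$ of the origin that is forward invariant under \eqref{eq:pert sys} and from which every trajectory reaches the origin within the fixed time $T$ of \eqref{new FxTS T est}; this is the asserted FxTS. I expect the reshaping step to be the only place where the proof does nontrivial work: one must choose $\mu$ and $\alpha_1,\alpha_2$ so that the symmetric exponent pair dominates $aV^p+bV^q$ uniformly in $V\ge 0$, which the $V\le 1$ / $V\ge 1$ dichotomy handles, whereas everything else is bookkeeping. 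It is also worth flagging that the disturbance enters only through $\delta_1 = k_2L/k_1$, so a larger Lipschitz constant $L$ enlarges $\delta_1$ and, by \eqref{eq: domain of attraction}, shrinks $D$ once $\delta_1 \ge 2\sqrt{\alpha_1\alpha_2}$; thus the conclusion is global only when $L$ is small enough that $r = \delta_1/(2\sqrt{\alpha_1\alpha_2})<1$.
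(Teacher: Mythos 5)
Your proof is correct, and its core is the same as the paper's: differentiate $V$ along \eqref{eq:pert sys}, bound the nominal part by $-aV^p-bV^q$ via Lemma \ref{FxTS TH}, bound the cross term via Cauchy--Schwarz together with $\left\|\frac{\partial V}{\partial x}\right\|\leq k_2\|x\|$, $\|\psi(x)\|\leq L\|x\|$, $V\geq k_1\|x\|^2$ to get $\frac{k_2L}{k_1}V$, and then invoke Theorem \ref{Th: FxTS new} with $\delta_1=\frac{k_2L}{k_1}$. Where you genuinely differ is the reshaping step. The paper plugs $-aV^p-bV^q+\frac{k_2L}{k_1}V$ directly into Theorem \ref{Th: FxTS new} and reads off the threshold ratio $\frac{k_2L}{2k_1\sqrt{ab}}$, which is only literally licensed when the exponents coming from Lemma \ref{FxTS TH} already have the symmetric form $p=1-\frac{1}{\mu}$, $q=1+\frac{1}{\mu}$ (equivalently $p+q=2$); for general $0<p<1<q$ the theorem as stated does not apply verbatim. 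Your choice of $\mu\geq\max\left\{\frac{1}{1-p},\frac{1}{q-1}\right\}$ with $\alpha_1=\alpha_2=\frac{1}{2}\min\{a,b\}$, justified by the $V\leq 1$ versus $V\geq 1$ dichotomy, closes that gap rigorously, at the price of conservative constants: the paper's (implicit) global-case condition $\frac{k_2L}{2k_1\sqrt{ab}}<1$ becomes $\frac{k_2L}{k_1\min\{a,b\}}<1$ in your version, and the domain and settling-time estimates obtained from \eqref{eq: domain of attraction}--\eqref{new FxTS T est} are correspondingly weaker. Your closing caveat --- that the conclusion is local FxTS with domain $D$, global only when $r<1$ --- matches the paper's own reading of the corollary, whose statement is otherwise loosely worded.
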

\begin{proof}
The time derivative of $V$ along the system trajectories of \eqref{eq:pert sys} reads
\begin{align*}
    \dot V = \frac{\partial V}{\partial x}f(x) + \frac{\partial V}{\partial x}\psi(x)\leq &-aV^p-bV^q + k_2L\|x\|^2\\
    \leq & -aV^p-bV^q + \frac{k_2L}{k_1}V.
\end{align*}
Hence, using Theorem \ref{Th: FxTS new}, we obtain the origin of \eqref{eq:pert sys} is FxTS for all $x(0)\in D$, where $D$ is a neighborhood of the origin. As per the conditions of Theorem \ref{Th: FxTS new}, $D= \mathbb R^n$ or $D \subset \mathbb R^n$, depending upon whether the ratio $\frac{k_2L}{2k_1\sqrt{ab}}$ is less or greater than 1, respectively.
\end{proof}

\section{Control Synthesis under Input Constraints}\label{sec: QP sims}
In this section, we use the Lyapunov condition \eqref{eq: dot V new ineq} in conjunction with Theorem \ref{Th: FxTS new} in a QP formulation to compute a control input so that the closed-loop trajectories reach a desired goal set within a fixed time. Consider the system:
\begin{align}\label{cont aff sys}
    \hspace{-5pt}\dot x(t) = f(x(t)) + g(x(t))u(x(t)),
\end{align}
where $x\in \mathbb R^n$ is the state vector, $f:\mathbb R^n\rightarrow \mathbb R^n$ and $g:\mathbb R^n\rightarrow \mathbb R^{n\times m}$ are continuous functions, and $u\in\mathcal U\subset \mathbb R^m$ is the control input where $\mathcal U$ denotes the control input constraint set. In addition, consider a goal set, to be reached in a user-defined fixed time $T$, defined as $S_G \coloneqq  \{x\; |\; h_G(x)\leq 0\}$, where $h_G:\mathbb R^n\rightarrow\mathbb R$ is a continuously differentiable function. 
Consider the QP
\begin{subequations}\label{QP gen}
\begin{align}
\min_{v\in \mathbb R^m, \delta_1\in \mathbb R} \; \frac{1}{2}z^THz & + F^Tz\\
    \textrm{s.t.} \quad \quad  A_uv  \leq & b_u, \label{C1 cont const}\\
    L_fh_G(x) + L_gh_G(x)v  \leq & \delta_1h_G(x)-\alpha_1\max\{0,h_G(x)\}^{\gamma_1} \nonumber\\
    & -\alpha_2\max\{0,h_G(x)\}^{\gamma_2} \label{C2 stab const},
\end{align}
\end{subequations}
where $z \coloneqq \begin{bmatrix}v^T & \delta_1\end{bmatrix}^T\in \mathbb R^{m+1}$, $H \coloneqq \textrm{diag}\{p_{u_1},\ldots, p_{u_m}, p_1\}$ is a diagonal matrix consisting of positive weights $p_{u_i}, p_1>0$, $F \coloneqq \begin{bmatrix}\mathbf 0_m^T & q_1\end{bmatrix}$ with $q_1>0$ and $\mathbf 0_m\in \mathbb R^m$ a vector consisting of zeros. The parameters $\alpha_1, \alpha_2, \gamma_1, \gamma_2$ are fixed, and are chosen as $\alpha_1 = \alpha_2 = \frac{\mu\pi}{2\bar T}$, $\gamma_1 = 1+\frac{1}{\mu}$ and $\gamma_2 = 1-\frac{1}{\mu}$ with $\mu>1$. Constraint \eqref{C1 cont const} encodes the control input constraints $u\in \mathcal U = \{v\; |\; A_uv\leq b_u\}$, while \eqref{C2 stab const} encodes the FxT-CLF condition. Below, we show that the QP \eqref{QP gen} is feasible, and under certain conditions, the control input defined as the solution of \eqref{QP gen} lead to FxTS convergence of the closed-loop trajectories. Let the solution of \eqref{QP gen} be denoted as $z^*(\cdot) = \begin{bmatrix}v^*(\cdot)^T & \delta_1^*(\cdot)\end{bmatrix}^T$. 

\begin{Lemma}
If the set $\mathcal U$ is non-empty, then the QP \eqref{QP gen} is feasible for all $x\notin S_G$. 
\end{Lemma}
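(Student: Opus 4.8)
The plan is to show feasibility of the QP \eqref{QP gen} by exhibiting, for each fixed $x \notin S_G$, an explicit feasible point $(v, \delta_1)$. The key observation is that the decision variable $\delta_1$ is free (it ranges over all of $\mathbb R$ and is penalized but not constrained beyond \eqref{C2 stab const}), which gives us enough slack to satisfy the CLF-type constraint \eqref{C2 stab const} regardless of what $v$ does.

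First I would pick any $v \in \mathcal U$; such a $v$ exists by the hypothesis that $\mathcal U$ is non-empty, and it automatically satisfies the input constraint \eqref{C1 cont const} since $\mathcal U = \{v \mid A_u v \le b_u\}$. With $v$ fixed, the left-hand side $L_f h_G(x) + L_g h_G(x) v$ of \eqref{C2 stab const} is a fixed real number, call it $c(x)$. Since $x \notin S_G$, we have $h_G(x) > 0$, so $\max\{0, h_G(x)\} = h_G(x) > 0$ and the two penalty terms on the right-hand side, $-\alpha_1 h_G(x)^{\gamma_1} - \alpha_2 h_G(x)^{\gamma_2}$, are a fixed negative number, call it $-d(x)$ with $d(x) > 0$. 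Thus \eqref{C2 stab const} reads $c(x) \le \delta_1 h_G(x) - d(x)$, i.e. $\delta_1 \ge \frac{c(x) + d(x)}{h_G(x)}$, which is solvable in $\delta_1 \in \mathbb R$ precisely because $h_G(x) > 0$ is strictly positive (division is legitimate). Choosing, for instance, $\delta_1 = \frac{c(x) + d(x)}{h_G(x)}$ yields a point $(v, \delta_1)$ satisfying both constraints, so the QP is feasible.

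I would then note that this argument is entirely pointwise in $x$ and uses only $h_G(x) > 0$, which is exactly the condition $x \notin S_G$; the strict positivity is what prevents the degenerate situation where the coefficient of $\delta_1$ vanishes. Since there are no obstacles of substance here — the construction is explicit and the only inequality to check reduces to a single scalar inequality solvable by inspection — the "hard part" is merely being careful that $h_G(x) \ne 0$ on the relevant domain, which holds by assumption, and observing that the objective being a convex quadratic plus the feasible set being non-empty is all that is needed for the QP to have a solution (the objective is bounded below on the feasible set since $H \succ 0$). Hence the statement follows.
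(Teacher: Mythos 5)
Your construction is exactly the paper's argument: pick any $v\in\mathcal U$ to satisfy \eqref{C1 cont const}, then use $h_G(x)>0$ for $x\notin S_G$ to solve \eqref{C2 stab const} explicitly for $\delta_1$ (your $\delta_1=\frac{c(x)+d(x)}{h_G(x)}$ is the paper's $\bar\delta_1$). The proof is correct and essentially identical to the one in the paper.
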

\begin{proof}
Choose any $\bar v\in \mathcal U$, and since $\mathcal U$ is non-empty, there exists such $\bar v$. For $x\notin S_G$, we have that $h_G(x)>0$ by definition, and thus $h_G(x) \neq 0$. Define $$\bar \delta_1= \frac{L_fh_G(x) + L_gh_G(x)\bar v+\alpha_1h_G(x)^{\gamma_1}+\alpha_2h_G(x)^{\gamma_2}}{h_G(x)},$$ so that \eqref{C2 stab const} is satisfied. Thus, the couple $(\bar v, \bar \delta_1)$ satisfies the constraints of the QP \eqref{QP gen} and hence, the QP \eqref{QP gen} is feasible, for all $x\notin S_G$. 
\end{proof}

The feasibility of \eqref{QP gen} is guaranteed because of the presence of the slack term $\delta_1V$. Note that in the absence of such a term, \eqref{QP gen} might be infeasible due to the presence of the control input constraints. This is one of the main advantages of using the condition \eqref{eq: dot V new ineq} as compared to \eqref{eq: dot V FxTS old }. The following result is adapted from \cite{garg2019prescribedTAC} that shows FxTS of the set $S_G$ under the control input $u(x) = v^\star(x)$. 

\begin{Theorem}[\hspace{-0.3pt}\cite{garg2019prescribedTAC}]\label{Th: d1 d2 P1 solve}
The closed-loop trajectories under the effect of the control input defined as $u(x) = v^*(x)$ reach the set $S_G$ within a fixed time $\bar T$ for all $x(0)\in D$, where:
\begin{itemize}
    \item[(i)] $D = \mathbb R^n$ and $\bar T = T$ if $\max\limits_{x}\delta_1^*(x) \leq 0$;
    \item[(ii)] $D = \mathbb R^n$ and $\bar T \leq \sup\limits_{x} \frac{\mu}{\alpha_1k_1(x)}\left(\frac{\pi}{2}-\tan^{-1}k_2(x)\right)$, where $k_1(x) = \sqrt{\frac{4\alpha_1\alpha_2-\delta_1^*(x)^2}{4\alpha_1^2}}$ and $k_2(x) = -\frac{\delta_1^*(x)}{\sqrt{4\alpha_1\alpha_2-\delta_1^*(x)^2}}$ if $\max\limits_{x}\delta_1^*(x) < 2\sqrt{\alpha_1\alpha_2}$;
    \item[(iii)] $D = \{z\; |\; V(z) \leq \inf\limits_x\left(\frac{\delta_1(x)-\sqrt{\delta_1(x)^2-4\alpha_1\alpha_2}}{2\alpha_1}\right)^\mu\}$ and $\bar T \leq \frac{\mu k}{(1-k)\sqrt{\alpha_1\alpha_2}}$ if $\max\limits_{x}\delta_1^*(x)>2\sqrt{\alpha_1\alpha_2}$. 
\end{itemize}
\end{Theorem}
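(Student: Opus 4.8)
The plan is to reduce this statement to Theorem \ref{Th: FxTS new}, using $\max\{0,h_G(\cdot)\}$ as the Lyapunov-type certificate for reaching $S_G$ and replacing the state-dependent slack $\delta_1^*(x)$ by the uniform bound $\bar\delta_1\coloneqq\max_x\delta_1^*(x)$, which the case hypotheses implicitly assume to be finite. By the preceding feasibility lemma, $v^*(x)$ is well-defined for every $x\notin S_G$; as in \cite{garg2019prescribedTAC} we take the map $x\mapsto v^*(x)$ to be regular enough that the closed-loop system $\dot x=f(x)+g(x)v^*(x)$ admits solutions.

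First I would derive a scalar differential inequality along the closed-loop trajectory. Fix $x(0)\notin S_G$ (otherwise there is nothing to prove) and let $[0,T_G)$ be the maximal interval on which $h_G(x(t))>0$; there $\max\{0,h_G(x(t))\}=h_G(x(t))$ and $\dot h_G(x(t))=L_fh_G(x(t))+L_gh_G(x(t))v^*(x(t))$, so constraint \eqref{C2 stab const} together with $\delta_1^*(x(t))h_G(x(t))\le\bar\delta_1 h_G(x(t))$ (valid since $h_G(x(t))>0$) gives
\begin{align*}
  \dot h_G(x(t))\le -\alpha_1 h_G(x(t))^{\gamma_1}-\alpha_2 h_G(x(t))^{\gamma_2}+\bar\delta_1 h_G(x(t)).
\end{align*}
Thus $h_G$ evaluated along the trajectory obeys exactly inequality \eqref{eq: dot V new ineq} of Theorem \ref{Th: FxTS new} with $V$ replaced by $h_G$ and $\delta_1$ replaced by $\bar\delta_1$, and I would then follow the three-case analysis of that proof with $r\coloneqq\bar\delta_1/(2\sqrt{\alpha_1\alpha_2})$.

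In case (i) ($\bar\delta_1\le 0$, i.e.\ $r\le 0$) the positive term vanishes, $\dot h_G<0$ whenever $h_G>0$, and the usual separation-of-variables/comparison argument with part (i) of Lemma \ref{lemma:int dot V} yields $T_G\le\mu\pi/(2\sqrt{\alpha_1\alpha_2})$, which equals the prescribed $\bar T$ by the choice $\alpha_1=\alpha_2=\mu\pi/(2\bar T)$; in particular $D=\mathbb R^n$. In case (ii) ($\bar\delta_1<2\sqrt{\alpha_1\alpha_2}$, i.e.\ $r<1$), since $s\mapsto\alpha_1 s+\alpha_2/s$ on $\mathbb R_+$ has minimum $2\sqrt{\alpha_1\alpha_2}>\bar\delta_1$, the right-hand side above is negative for every $h_G>0$, so again $D=\mathbb R^n$, and part (i) of Lemma \ref{lemma:int dot V} with $\delta_1=\bar\delta_1$ gives $T_G\le\frac{\mu}{\alpha_1\bar k_1}\left(\frac{\pi}{2}-\tan^{-1}\bar k_2\right)$; since $\delta\mapsto\frac{\mu}{\alpha_1 k_1}\left(\frac{\pi}{2}-\tan^{-1}k_2\right)$ is increasing in $\delta$, this coincides with the supremum over $x$ in the statement. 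In case (iii) ($\bar\delta_1>2\sqrt{\alpha_1\alpha_2}$, i.e.\ $r>1$), the right-hand side above is negative precisely while $h_G(x)^{1/\mu}<V_1(\bar\delta_1)$, where $V_1(\delta)=(\delta-\sqrt{\delta^2-4\alpha_1\alpha_2})/(2\alpha_1)$ is decreasing in $\delta$; hence on $D=\{x:h_G(x)\le(k\inf_x V_1(\delta_1^*(x)))^\mu\}=\{x:h_G(x)\le(kV_1(\bar\delta_1))^\mu\}$ one has $\dot h_G<0$ on $D\setminus S_G$, so $D$ is forward invariant, and part (ii) of Lemma \ref{lemma:int dot V} gives $T_G\le\mu k/((1-k)\sqrt{\alpha_1\alpha_2})$. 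In all three cases the bound on $T_G$ is independent of $x(0)$, so $S_G$ is reached within the claimed time, and the argument also shows $x(t)\in D$ until then.

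The main obstacle I anticipate is that Theorem \ref{Th: FxTS new} cannot be invoked verbatim, since there $\delta_1$ is a constant whereas here $\delta_1^*$ varies along the trajectory; the key device is the uniform replacement by $\bar\delta_1=\max_x\delta_1^*(x)$ together with the monotonicity in $\delta$ of both the per-case settling-time estimates and of $V_1(\delta)$, which is exactly what makes the $\sup_x$ and $\inf_x$ in the statement legitimate. The secondary technical points — finiteness of $\max_x\delta_1^*(x)$, enough regularity of $v^*$ for the closed-loop ODE to admit solutions, and forward invariance of $D$ in case (iii) (needed for the separation-of-variables step up to $T_G$) — are handled as in \cite{garg2019prescribedTAC}.
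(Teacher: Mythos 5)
Your proposal is correct and follows essentially the route the paper intends: the paper defers the proof to \cite{garg2019prescribedTAC}, but the argument there is exactly this reduction — use constraint \eqref{C2 stab const} to get the differential inequality \eqref{eq: dot V new ineq} along closed-loop trajectories with $V$ built from $h_G$, bound the state-dependent slack by $\sup_x\delta_1^*(x)$, and invoke the case analysis of Theorem \ref{Th: FxTS new} / Lemma \ref{lemma:int dot V}, with the monotonicity in $\delta_1$ justifying the $\sup_x$ and $\inf_x$ in the statement. Your retention of the factor $k^\mu$ in the case-(iii) domain is in fact more faithful to Theorem \ref{Th: FxTS new} than the theorem statement as printed, which drops it.
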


\noindent From the expression of domain of attraction $D$ in \eqref{eq: domain of attraction}, it can be observed that the domain $D$ shrinks as the ratio $r$ increases. In particular, for the case when $\alpha_1 = \alpha_2  = \alpha$ and $r\geq 1$, the domain of attraction is given as $D = \{x\; |\; V(x)\leq \left(r-\sqrt{r^2-1}\right)^\mu\}$ where $r = \frac{\delta_1}{2\alpha}$. Under the same conditions, the domain of attraction for the closed-loop system \eqref{cont aff sys} under $u = v^\star$ is given as $D = \{x\; |\; V(x)\leq \left(r_M-\sqrt{r_M^2-1}\right)^\mu\}$, which is a function of $r_M \coloneqq \sup \frac{\delta_1^\star(x)}{2\alpha}$. 

\begin{figure}[b]
    \centering
        \includegraphics[width=0.9\columnwidth,clip]{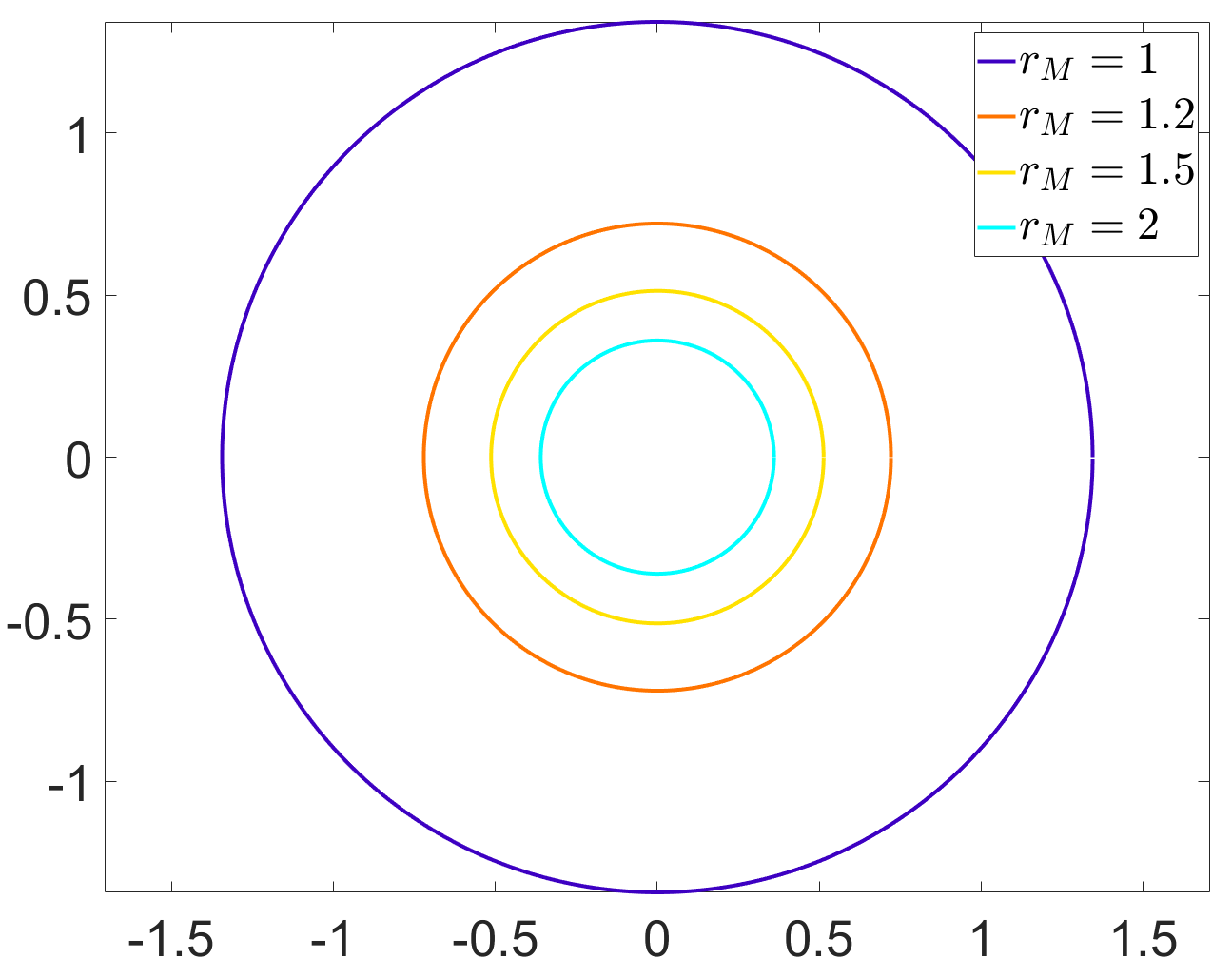}
    \caption{Domain of attraction $D$ for various values of $r_M$.}\label{fig:DoA}
\end{figure}

Figure \ref{fig:DoA} plots the boundary of the set $D$ for various values of $r_M$. Thus, if $\delta_1$ is \textit{small} relative to $\alpha_1, \alpha_2$, then the domain of attraction is large for fixed-time convergence, i.e., the slack term corresponding to $\delta_1$ in QP \eqref{QP gen} characterizes the trade-off between the domain of attraction and time of convergence for given control input bounds. Intuitively, for a given control input constraint set, a larger value of $T$ results into smaller values of $\alpha_1, \alpha_2$, which would result in satisfaction of \eqref{C2 stab const} with smaller value of $\delta_1$. Conversely, for a given $T$ (and thus, for a given pair $\alpha_1, \alpha_2$), a larger control authority would result into satisfaction of \eqref{C2 stab const} with smaller $\delta_1$. This relation between the domain of attraction, the input constraints, and the time of convergence is studied in more detail in \cite{garg2019prescribedTAC}, where KKT conditions are used to compute the closed-form expression for the solution of the QP \eqref{QP gen}, and it is shown that increasing the control bounds or the require time of convergence results in smaller values of $r_M$. 
In this paper, we verify this relation via numerical simulations.

\vspace{2pt}
\noindent \textbf{Numerical experiments}: We consider the following system:
\begin{align*}
    \dot x_1 &= x_2 + x_1(x_1^2+x_2^2-1)+x_1u, \\
    \dot x_2 &= -x_1+\zeta(x_2)(x_1^2+x_2^2-1)+x_2u,
\end{align*}
where $x = [x_1, x_2]^T\in \mathbb R^2, u\in \mathbb R$, $\zeta(z) = (0.8+0.2e^{-100|z|})\tanh(z)$ and $S_G = \{x\; |\; \|x\|\leq 1\}$. Note that in the absence of the control input, the trajectories diverge away from $S_G$, i.e., the set $S_G$ is unstable for the open-loop system. We define $h_G(x) = \|x\|^2-1$. We impose control input bounds of the form $\|u\|\leq u_{max}$, where $u_{max}>0$. The initial conditions are choosen as $x(0) = [3.33, 1.33]^T$.  

\begin{figure}[t]
    \centering
        \includegraphics[width=1\columnwidth,clip]{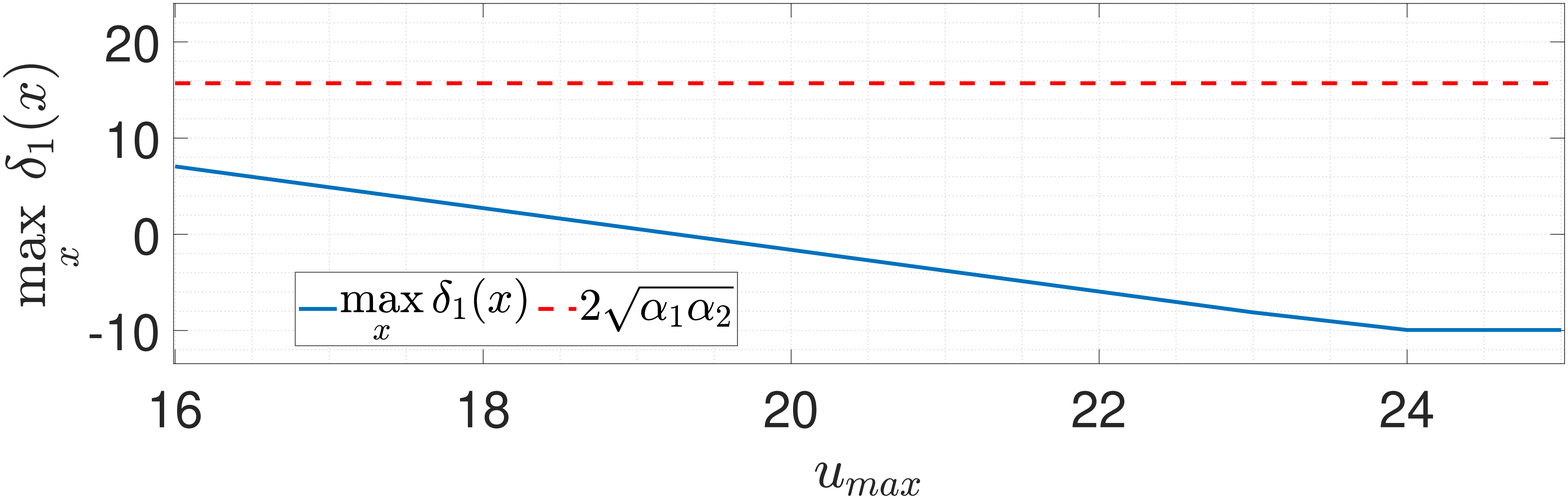}
    \caption{Variation of $\max\delta_1$ for various control input bounds $u_{max}$.}\label{fig:del1 umax}
\end{figure}

We choose $p_{u_1}, p_{u_2} = 1, \mu = 2$ for the numerical simulations. First, we studied the effect of the control input bound on the maximum value of $\delta_1$. We fixed $T = 1, p_1 = 100, q_1 = 1000$, and varied $u_{max}$. Figure \ref{fig:del1 umax} plots the maximum value of $\max_x\delta_1(x)$ for various values of $u_{max}\in [16\;,\; 25]$.\footnote{Since the open-loop system is unstable, for given set of initial conditions, it is observed that the closed-loop trajectories diverge for $u_{max}\leq 16$.} It can be observed that $\delta_1$ decreases as the control authority of the system increases. This verifies the intuition that the domain of attraction expands as the control authority increases. 

\begin{figure}[b]
    \centering
        \includegraphics[width=1\columnwidth,clip]{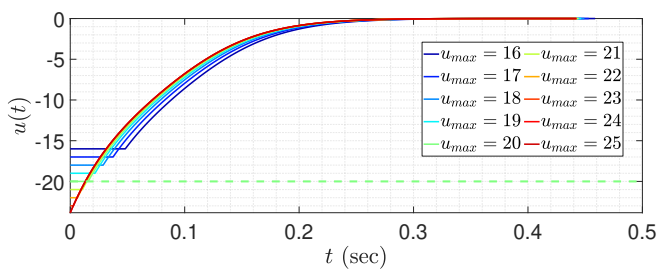}
    \caption{Control input $u(t)$ for various control input bounds $u_{max}$.}\label{fig:u bound umax}
\end{figure}

Figure \ref{fig:u bound umax} plots the norm of the control input with time for various values of $u_{max}$. The value of $u_{max}$ increases from $16$ to $25$ from blue to red. It can be observed that in every case, the system trajectories do utilize the maximum available control authority at the beginning of the simulation, while the control input decreases to zero as the system trajectories approach the goal set. 





Next, we fix $u_{max} = 16, p_1 = 100, q_1 = 1000$ and vary the required time of convergence $T$ between 1 and 10. 
Figure \ref{fig:del1 T} shows the variation of $\max_x\delta_1(x)$ as a function of the convergence time $T$. As $T$ increases (or equivalently, $\alpha_1, \alpha_2$ decrease), the maximum value of $\delta_1(\cdot)$ decreases. This implies that for a larger time of convergence, there is a larger domain of attraction starting from which convergence can be achieved in the given time. 


\begin{figure}[!ht]
    \centering
        \includegraphics[width=1\columnwidth,clip]{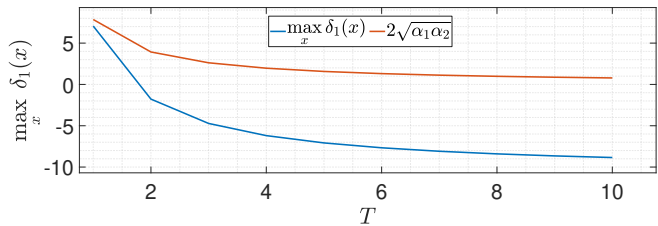}
    \caption{Variation of $\max\delta_1$ for various user-defined convergence time $T$.}\label{fig:del1 T}
\end{figure}



These (numerical) relations indicate that for a required domain of attraction $D$, one can choose the parameters $u_{max}$ and $T$ so that the presented QP in \eqref{QP gen} guarantees FxTS for any initial condition in $D$. Feasibility of the QP \eqref{QP gen} guarantees that for this choice of parameters, a control input exists and renders the goal set FxTS within the chosen time $T$. Conversely, for a given input bound and required time of convergence, it is possible to find the largest domain of attraction by computing the maximum value of $\delta_1$.


\section{Conclusion}\label{sec: conclusion}
We proposed a new result on FxTS by allowing a positive linear term to appear in the time derivative of the Lyapunov function. We characterized the domain of attraction, as well as the upper bound on the time of convergence for fixed-time stability as a function of the coefficients of the positive and the negative terms in the upper bound of the time derivative of the Lyapunov function. We then used the new FxTS result in a QP formulation and showed that the feasibility of the QP is guaranteed due to the presence of the slack term that corresponds to the newly added linear term in our FxTS result. For the QP-based control design technique, we numerically established a relation of the maximum value of this slack term, which characterizes the domain of attraction for fixed-time convergence, with the control input bound, and with the required time of convergence. It is thus shown that with an appropriate choice of the required time of convergence and control input bounds, the presented result can guarantee FxTS from the desired domain of attraction. 

In the future, we would like to study multi-objective problems involving both safety and convergence requirements and find the relations between the largest domain of attraction for fixed-time convergence and the largest subset of the safe set that can be rendered forward invariant, parametrized by the control input bounds and the time of convergence. 

\bibliographystyle{IEEEtran}
\bibliography{myreferences}

\appendices

\section{Proof of Lemma \ref{lemma:int dot V}}\label{app Lemma int dot V proof}
\begin{proof}
We have{\small
\begin{align*}
    I & = \int_{V_0}^{0}\frac{dV}{-\alpha_1V^{\gamma_1}-\alpha_2V^{\gamma_2}+\delta_1V} = \int_{V_0}^{0}\frac{dV}{V(-\alpha_1V^{\frac{1}{\mu}}-\alpha_2V^{\frac{-1}{\mu}}+\delta_1)}.
\end{align*}}\normalsize
Substitute $m = V^{\frac{1}{\mu}}$, so that $dm = \frac{1}{\mu}V^{\frac{1}{\mu}-1}dV$, which implies that $\frac{1}{\mu}\frac{dV}{V} = \frac{dm}{V^\frac{1}{\mu}} = \frac{dm}{m}$. Using this, we obtain that $I =  \mu\int_{V_0^{\frac{1}{\mu}}}^{0}\frac{dm}{(-\alpha_1m^2-\alpha_2+\delta_1m)}$. Now, we consider the three cases, namely, $\delta_1< 2\sqrt{\alpha_1\alpha_2}$, $\delta_1= 2\sqrt{\alpha_1\alpha_2}$ and $\delta_1> 2\sqrt{\alpha_1\alpha_2}$ separately. 

First, consider the cases when $\delta_1< 2\sqrt{\alpha_1\alpha_2}$. In the case, we can re-write $I$ as $I = \mu\int_{V_0^{\frac{1}{\mu}}}^{0}\frac{dm}{-\alpha_1\left((m-\frac{\delta_1}{2\alpha_1})^2+\frac{4\alpha_1\alpha_2-\delta_1^2}{4\alpha_1^2}\right)}$. Evaluating the integral, we obtain
\begin{align*}
    I & = \frac{\mu}{-\alpha_1k_1}(\tan^{-1}k_2-\tan^{-1}k_3),
\end{align*}
where $k_1 = \sqrt{\frac{4\alpha_1\alpha_2-\delta_1^2}{4\alpha_1^2}}$, $k_2 = -\frac{\delta_1}{\sqrt{4\alpha_1\alpha_2-\delta_1^2}}$ and $k_3 = \frac{2\alpha_1V_0^{\frac{1}{\mu}}-\delta_1}{\sqrt{4\alpha_1\alpha_2-\delta_1^2}}$. Using this and $\tan^{-1}(\cdot)\leq \frac{\pi}{2}$, we obtain 
\begin{align*}
    I & = \frac{\mu}{\alpha_1k_1}(\tan^{-1}k_3-\tan^{-1}k_2)\leq \frac{\mu}{\alpha_1k_1}(\frac{\pi}{2}-\tan^{-1}k_2).
\end{align*}

Next, we consider the case when $\delta_1> 2\sqrt{\alpha_1\alpha_2}$. In this case, the roots of $\gamma(m) = 0$ are real. Let $a\leq b$ be the such that $\alpha_1m^2-\delta_1m+\alpha_2 = \alpha_1(m-a)(m-b)$. This substitution allows us to factorize the denominator to evaluate the integral $I$. Note that since $ab = \alpha_2>0$ and $a+b = \delta_1$, we have $0<a\leq b$. Since $V_0^\frac{1}{\mu}\leq k\frac{\delta_1-\sqrt{\delta_1^2-4\alpha_1\alpha_2}}{2\alpha_1} = ka$ where $k<1$, we have that $\frac{1}{-\alpha_1V^{\gamma_1}-\alpha_2V^{\gamma_2}+\delta_1V}<0$ for all $V\leq V_0$, i.e., the denominator $\delta_1V-\alpha_1V^{\gamma_1}+\alpha_2V^{\gamma_2}$ does not vanish for $V\in [0, V_0]$. Thus, we obtain that 
{
\begin{align*}
    I &  = \mu\int_{V_0^{\frac{1}{\mu}}}^{0}\frac{dm}{(-\alpha_1m^2-\alpha_2+\delta_1m)} \\
    & = -\frac{\mu}{\alpha_1}\int_{V_0^{\frac{1}{\mu}}}^{0}\frac{dm}{(m-a)(m-b)}\\
    & = -\frac{\mu}{\alpha_1(a-b)}\left(\int_{V_0^{\frac{1}{\mu}}}^{0}\frac{dm}{m-a}-\int_{V_0^{\frac{1}{\mu}}}^{0}\frac{dm}{m-b}\right).
\end{align*}
}
Evaluating the integrals, we obtain{\small
\begin{align*}
     I & = \frac{-\mu}{\alpha_1(a-b)}\left(\log\left(\frac{a}{|V_0^\frac{1}{\mu}-a|}\right)-\log\left(\frac{b}{|V_0^\frac{1}{\mu}-b|}\right)\right)\\
     & = \frac{\mu}{\alpha_1(a-b)}\left(\log\left(\frac{b}{a}\right)+\log\left(\frac{|V_0^\frac{1}{\mu}-a|}{|V_0^\frac{1}{\mu}-b|}\right)\right)\\
      & \leq \frac{\mu}{\alpha_1(b-a)}\left(\log\left(\frac{b-ka}{a(1-k)}\right)-\log\left(\frac{b}{a}\right)\right),
\end{align*}}\normalsize
It can be easily shown that the above upper-bound on $I$ decreases monotonically as the ratio $\frac{\delta_1}{2\sqrt{\alpha_1\alpha_2}}$ increases from 1 to $\infty$. Thus, the maximum value of this upper-bound is achieved in the limit when $\frac{\delta_1}{2\sqrt{\alpha_1\alpha_2}} = 1$. Note also that $\frac{\delta_1}{2\sqrt{\alpha_1\alpha_2}} \to 1$ implies that $b\to a$ and in the limit, we have $a = b = \frac{\delta_1}{2\alpha_1}$. Thus, the maximum value of the upper-bound can be computed by taking limit $b\to a$ as follows:
\begin{align*}
    I\leq & \frac{\mu}{\alpha_1(b-a)}\left(\log\left(\frac{b-ka}{a(1-k)}\right)-\log\left(\frac{b}{a}\right)\right)\\
    \leq & \sup_{\frac{\delta_1}{2\sqrt{\alpha_1\alpha_2}}\geq 1}\frac{\mu}{\alpha_1(b-a)}\left(\log\left(\frac{b-ka}{a(1-k)}\right)-\log\left(\frac{b}{a}\right)\right)\\
    = & \lim_{\frac{\delta_1}{2\sqrt{\alpha_1\alpha_2}}\to 1}\frac{\mu}{\alpha_1(b-a)}\left(\log\left(\frac{b-ka}{a(1-k)}\right)-\log\left(\frac{b}{a}\right)\right)\\
    = & \lim_{b\to a}\frac{\mu}{\alpha_1(b-a)}\left(\log\left(\frac{b-ka}{a(1-k)}\right)-\log\left(\frac{b}{a}\right)\right)\\
     = & \frac{\mu k}{(1-k)\alpha_1 a} = \frac{\mu k}{(1-k)\sqrt{\alpha_1\alpha_2}}, 
\end{align*}
which completes the proof. 
\end{proof}

\end{document}